\documentclass[11pt]{article}

\usepackage[margin=3cm]{geometry}
\usepackage{amsmath,amssymb,amsthm}
\usepackage[hidelinks]{hyperref}

\newtheorem{theorem}{Theorem}
\newtheorem{lemma}[theorem]{Lemma}
\newtheorem{remark}{Remark}

\newcommand{\J}{J}
\newcommand{\Mav}{\widehat{M}_{\mu}}

\title{An Exact Obstruction to Uniform Average Mixing on $P_{11}$}
\author{Musung Kang\thanks{Department of Mathematical Sciences,
Seoul National University, Seoul, Republic of Korea.
Email: \texttt{musung098@snu.ac.kr}}}
\date{}

\begin{document}
\maketitle

\begin{abstract}
We prove that the path $P_{11}$ does not admit uniform average mixing
under any probability distribution on $\mathbb R$, answering a question
of Baptista, Coutinho, and Marques in the negative.  The proof is exact:
we construct an explicit rational symmetric matrix $Y$ such that
$\langle Y,M(t)\rangle_F=1$ for every $t\in\mathbb R$, whereas
$\langle Y,J/11\rangle_F=12/11$.
\end{abstract}

\section{Statement}

Let $A$ be the adjacency matrix of a graph on $n$ vertices and define
\begin{align*}
U(t)=\exp(itA),
\quad
M(t)=U(t)\circ\overline{U(t)}.
\end{align*}
Thus $M_{pq}(t)=|U_{pq}(t)|^2$.  For real matrices $X$ and $Z$ of the same
size, their Frobenius inner product is
\begin{align*}
\langle X,Z\rangle_F
:=\operatorname{tr}(X^{\mathsf T}Z)
=\sum_{p,q}X_{pq}Z_{pq}.
\end{align*}
For a probability measure $\mu$ on
$\mathbb R$, set
\begin{align*}
\Mav=\int_{\mathbb R}M(t)\,d\mu(t).
\end{align*}
Following \cite{BCM24}, uniform average mixing under $\mu$ means
$\Mav=\frac{1}{n}\J$, where $\J$ denotes the all-ones matrix. We use standard measure theory; see \cite[Chapter~3]{Folland}.

\begin{lemma}[Constant linear functional]\label{lem:linf}
Suppose that a real symmetric matrix $Y$ satisfies
\begin{align*}
g_Y(t):=\langle Y,M(t)\rangle_F=\gamma
\quad\text{for every }t\in\mathbb R.
\end{align*}
Then
\begin{align*}
\langle Y,\Mav\rangle_F=\gamma
\end{align*}
for every probability measure $\mu$.  Consequently, if
\begin{align*}
\left\langle Y,\frac{1}{n}\J\right\rangle_F\ne\gamma,
\end{align*}
then $\Mav\ne\frac{1}{n}\J$ for every $\mu$.

More generally, 
the same conclusion holds for every finite signed measure
$\nu$ of total mass one 
for which $g_Y$ is integrable with respect to the total-variation measure $|\nu|$. 
\end{lemma}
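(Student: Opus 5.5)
The plan is to use linearity of the entrywise integral together with the fact that the Frobenius pairing with a fixed matrix is a finite linear combination of entries. First I will check measurability and boundedness. Each entry $M_{pq}(t)=|U_{pq}(t)|^2$ is a continuous function of $t$, since $U(t)=\exp(itA)$ is entire in $t$. Because $U(t)$ is unitary for real $t$ (as $A$ is real symmetric), each row of $M(t)$ sums to $\sum_q|U_{pq}(t)|^2=1$, so $0\le M_{pq}(t)\le 1$. Hence every entry is a bounded Borel function, and $\Mav$ is well defined entrywise for any probability measure $\mu$.

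Next I will interchange the finite sum with the integral:
\begin{align*}
\langle Y,\Mav\rangle_F=\sum_{p,q}Y_{pq}\int M_{pq}(t)\,d\mu(t)=\int\sum_{p,q}Y_{pq}M_{pq}(t)\,d\mu(t)=\int g_Y(t)\,d\mu(t).
\end{align*}
This is just linearity of the Lebesgue integral over finitely many integrable functions. Since $g_Y\equiv\gamma$ and $\mu(\mathbb R)=1$, the right-hand side equals $\gamma$. For the consequence I argue by contradiction: if $\Mav=\frac1n\J$, then $\langle Y,\frac1n\J\rangle_F=\langle Y,\Mav\rangle_F=\gamma$, which contradicts the hypothesis.

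For the signed-measure extension, let $\nu$ be a finite signed measure with $\nu(\mathbb R)=1$, and take its Jordan decomposition $\nu=\nu^+-\nu^-$, with $|\nu|=\nu^++\nu^-$ finite. Each $M_{pq}$ is bounded by $1$, so it is $|\nu|$-integrable, and $\Mav=\int M\,d\nu$ is defined entrywise as $\int M\,d\nu^+-\int M\,d\nu^-$. The same interchange of the finite sum with the integral, carried out separately for $\nu^\pm$, gives $\langle Y,\Mav\rangle_F=\int g_Y\,d\nu=\gamma\,\nu(\mathbb R)=\gamma$. The stated integrability hypothesis on $g_Y$ is automatic here, since $g_Y$ is constant and $|\nu|$ is finite; it is worth stating only so that the argument does not depend on the bound on $M$. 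The remaining part of the conclusion is the same contradiction as before.

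There is no real obstacle in this argument. The only points needing care are the well-definedness of $\Mav$, which follows from the bound $0\le M_{pq}\le 1$ coming from unitarity, and the handling of the signed case through the Jordan decomposition. The substantive difficulty in the paper lies elsewhere, namely in constructing a $Y$ for which $g_Y$ is actually constant.
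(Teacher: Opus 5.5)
Your proof is correct and follows the paper's argument. You swap the finite Frobenius sum with the integral by linearity, use $g_Y\equiv\gamma$ together with total mass one, and rely on the bound $0\le M_{pq}(t)\le 1$ (from unitarity) for integrability; the only cosmetic difference is that the paper treats the signed measure $\nu$ directly and then specializes to $\mu$, whereas you handle the probability case first and then pass through the Jordan decomposition.
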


\begin{proof}
Let $\nu$ be such a signed measure.  Since the sum defining the Frobenius inner
product is finite, linearity of integration gives
\begin{align*}
\left\langle Y,\int_{\mathbb R}M(t)\,d\nu(t)\right\rangle_F
=\int_{\mathbb R}g_Y(t)\,d\nu(t)
=\gamma\nu(\mathbb R)
=\gamma.
\end{align*}
The assertion for a probability measure is the special case $\nu=\mu$.
Moreover, the entries of $M(t)$ lie in $[0,1]$, so $g_Y$ is bounded; hence the
stated integrability is automatic for every finite signed measure.  The last
assertion follows by applying the same linear functional to $\frac{1}{n}\J$.
\end{proof}

\begin{theorem}\label{thm:p11}
For the adjacency matrix of $P_{11}$, no probability measure $\mu$ on
$\mathbb{R}$ achieves uniform average mixing.  In other words, for every
probability measure $\mu$ on $\mathbb{R}$,
\begin{align*}
\Mav \neq \frac{1}{11}\J.
\end{align*}
Thus $P_{11}$ answers the question of \cite[\S 7]{BCM24} in the negative.
\end{theorem}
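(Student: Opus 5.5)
By Lemma~\ref{lem:linf}, it suffices to exhibit a real symmetric $11\times 11$ matrix $Y$ with $g_Y(t)=\langle Y,M(t)\rangle_F$ constant in $t$ and $\langle Y,\frac{1}{11}\J\rangle_F\neq$ that constant. The abstract indicates the target normalization: $g_Y\equiv 1$ and $\langle Y,\J/11\rangle_F=12/11$.

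To find $Y$, I will use the spectral decomposition of the path. The eigenvalues of $P_{11}$ are $\theta_k=2\cos(k\pi/12)$, $k=1,\dots,11$, with eigenvectors $v_k(p)=\sqrt{2/12}\,\sin(pk\pi/12)$. Then
\begin{align*}
M_{pq}(t)=\sum_{k,l} e^{it(\theta_k-\theta_l)}\,v_k(p)v_k(q)v_l(p)v_l(q).
\end{align*}
Grouping terms by the distinct differences $\theta_k-\theta_l$, the function $g_Y(t)$ is a trigonometric polynomial. Its frequency-$\omega$ coefficient is
\begin{align*}
\sum_{(k,l):\,\theta_k-\theta_l=\omega}\ \sum_{p,q}Y_{pq}\,v_k(p)v_k(q)v_l(p)v_l(q)
=\sum_{(k,l):\,\theta_k-\theta_l=\omega}\langle Y,(v_k\circ v_l)(v_k\circ v_l)^{\mathsf T}\rangle_F .
\end{align*}
Constancy of $g_Y$ means that every coefficient with $\omega\neq 0$ vanishes. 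This is a finite homogeneous linear system in the $66$ unknowns $Y_{pq}$ ($p\le q$). I will also impose the mirror symmetry $Y_{pq}=Y_{12-p,12-q}$, since $M(t)$ has it, which roughly halves the number of unknowns.

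The main obstacle is that the $\theta_k$ live in $\mathbb Q(\cos(\pi/12))$, a degree-4 field, and may satisfy coincidences $\theta_k-\theta_l=\theta_{k'}-\theta_{l'}$. These coincidences merge frequencies and reduce the number of constraints, and that is presumably why a nonzero solution exists for $n=11$. I will determine the distinct differences exactly, using the Galois conjugates and the relation $\cos a-\cos b=-2\sin\frac{a+b}{2}\sin\frac{a-b}{2}$. Requiring each coefficient to vanish with $Y$ rational means each coefficient, written in a $\mathbb Q$-basis of the field, gives several rational linear equations. I will solve this system exactly to obtain a solution space, and then pick a $Y$ in it with $\langle Y,\J\rangle_F\neq 11\, g_Y(0)$. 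Since $M(0)=I$, we have $g_Y(0)=\operatorname{tr}Y$, so the goal is to find a solution with $\operatorname{tr}Y=1$ and $\sum_{p,q}Y_{pq}=12$. If every solution satisfies $\sum Y_{pq}=11\operatorname{tr}Y$, the method fails, and the existence of a good $Y$ is exactly the content of the computation.

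Finally, I will present the explicit rational $Y$ and verify it independently of the search. Expanding $M(t)$ as $\sum_{\omega}e^{i\omega t}C_\omega$, with the $C_\omega$ grouped as above, I will check $\langle Y,C_\omega\rangle_F=0$ for all $\omega\neq0$ and $\langle Y,C_0\rangle_F=1$. Equivalently, I can expand $\langle Y,M(t)\rangle_F$ and show it equals $\operatorname{tr}Y=1$. Together with $\langle Y,\J/11\rangle_F=12/11\neq1$, Lemma~\ref{lem:linf} then gives $\Mav\neq\frac1{11}\J$ for every probability measure $\mu$, which proves Theorem~\ref{thm:p11}.
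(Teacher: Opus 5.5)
Your reduction via Lemma~\ref{lem:linf}, and your expansion of $g_Y(t)$ as a trigonometric polynomial whose nonzero-frequency coefficients must vanish, match the paper's framework. But the proposal stops where the proof actually begins. You never exhibit $Y$, and you give no argument that your linear system has a solution with $\sum_{p,q}Y_{pq}\neq 11\operatorname{tr}Y$. You acknowledge this yourself (``if every solution satisfies \dots, the method fails''), so as written nothing excludes failure, and the theorem is not proved. The existence of such a $Y$ is the entire content of the statement. Describing a search for it, and promising to verify whatever the search returns, is not a substitute.

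The paper fills this gap in three steps. First, it gives an explicit $Y$: $Y_{pp}=(-1)^{p+1}$ and $Y_{p,12-p}=Y_{12-p,p}=-\frac{1}{2}(-1)^{p+1}$ for $1\le p\le5$, $Y_{p6}=Y_{6p}=2\sin^2(p\pi/6)$ for $1\le p\le5$, and all other entries $0$. This gives $\operatorname{tr}Y=1$ and $\sum_{p,q}Y_{pq}=12$. Second, it evaluates $C_{kl}=\langle Y,E_k\circ E_l\rangle_F$ exactly, using $36C_{kl}=(1-(-1)^{k+l})A_{kl}+4x_6B_{kl}$ and orthogonality of twelfth roots of unity. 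The result is $C_{kk}=1/6$ for odd $k$ and $0$ for even $k$; $C_{kl}=1/12$ when one index is $6$ and the other is odd; $C_{kl}=-1/12$ on the six pairs $(1,3),(1,5),(3,7),(5,9),(7,11),(9,11)$; and $0$ otherwise. Third, it identifies the frequency coincidences under which the twelve nonzero off-diagonal coefficients cancel in three groups of four. The relations $\theta_{12-k}=-\theta_k$ and $\theta_6=0$ come from bipartiteness and are not enough. The cancellation also needs the non-obvious relation $\theta_1=\theta_3+\theta_5$, i.e.\ $\cos15^\circ=\cos45^\circ+\cos75^\circ$. For example, it identifies the frequency $\theta_1-\theta_3$ of the pairs $(1,3),(9,11)$ with the frequency $\theta_5$ of $(5,6),(6,7)$, so that $\frac{1}{12}+\frac{1}{12}-\frac{1}{12}-\frac{1}{12}=0$. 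Your step ``determine the distinct differences exactly'' is therefore the crux, not bookkeeping, and it has to be carried out. Your extra normalization $Y_{pq}=Y_{12-p,12-q}$ is harmless: averaging any valid $Y$ with its reversal preserves both $g_Y$ and $\langle Y,\J\rangle_F$. Note, though, that the paper's $Y$ is not itself reversal-symmetric.
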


\section{The exact matrix}
Number the vertices of $P_{11}$ by $1, \ldots, 11$. 
Define the real symmetric matrix $Y$ by
\begin{align*}
Y_{pp}&=(-1)^{p+1}, \quad 1 \le p \le 5, \\
Y_{p,12-p}=Y_{12-p,p}&=-\frac{(-1)^{p+1}}{2}, \quad 1 \le p\le5, \\
Y_{p6}=Y_{6p}&=2\sin^2\frac{p\pi}{6}, \quad 1 \le p \le5,
\end{align*}
and set every other entry equal to zero. The eigenvalues of $A(P_{11})$
\[
\theta_k=2\cos\frac{k\pi}{12},
\quad 1\le k\le11
\]
are simple. Their rank-one spectral idempotents are
$E_k=v_kv_k^{\mathsf T}$, where
\[
(v_k)_p=\frac{1}{\sqrt6}\sin\frac{pk\pi}{12}.
\]
Consequently,
\[
(E_k)_{pq}
=\frac{1}{6}\sin\frac{pk\pi}{12}\sin\frac{qk\pi}{12},
\quad 1\le p,q,k\le11.
\] 
Moreover, by the standard spectral decomposition of a path
(see \cite[\S 1.4.4]{BH}),
\[
A=\sum_{k=1}^{11}\theta_kE_k,
\quad
U(t)=\sum_{k=1}^{11}e^{i\theta_kt}E_k.
\]
Therefore,
\begin{align*}
M(t)
=\sum_{k=1}^{11}E_k\circ E_k
+2\sum_{1\le k<l\le11}
\cos\bigl((\theta_k-\theta_l)t\bigr)(E_k\circ E_l).
\end{align*}
For $1\le k,l\le11$, write
\begin{align*}
C_{kl}:=\langle Y,E_k\circ E_l\rangle_F.
\end{align*}

\begin{lemma}\label{lem:coeff}
The diagonal coefficients are
\begin{align*}
C_{kk}=
\begin{cases}
    \frac{1}{6},    &   k \text{ odd}, \\
    0,  &   k \text{ even}.
\end{cases}
\end{align*}
For $k<l$, the only nonzero coefficients are
\begin{align*}
C_{kl}=
\begin{cases}
\frac{1}{12}, & 6 \in \{k,l\} \text{ and the other index is odd}, \\
-\frac{1}{12}, &(k,l)\in\mathcal{S},
\end{cases}
\end{align*}
where 
$\displaystyle \mathcal{S}=\{(1,3),(1,5),(3,7),(5,9),(7,11),(9,11)\}$.
\end{lemma}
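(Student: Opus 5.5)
Since $Y$ has only three kinds of nonzero entries, I will expand
\[
C_{kl}=\sum_{p,q}Y_{pq}(E_k)_{pq}(E_l)_{pq}
=\frac{1}{36}\sum_{p,q}Y_{pq}\,s_p(k)s_q(k)s_p(l)s_q(l),
\qquad s_p(k):=\sin\frac{pk\pi}{12},
\]
and split it into three pieces: the diagonal part ($p=q\le5$), the anti-diagonal part ($q=12-p$), and the column/row through vertex $6$. Two symmetries simplify this. First, $s_{12-p}(k)=(-1)^{k+1}s_p(k)$. Second, $s_6(k)=\sin(k\pi/2)$, which vanishes for even $k$ and equals $(-1)^{(k-1)/2}$ for odd $k$.

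The three pieces then become:
\begin{align*}
\text{diagonal:}\quad & \sum_{p=1}^5(-1)^{p+1}s_p(k)^2s_p(l)^2,\\
\text{anti-diagonal:}\quad & 2\sum_{p=1}^5\Bigl(-\tfrac{(-1)^{p+1}}{2}\Bigr)s_p(k)s_{12-p}(k)s_p(l)s_{12-p}(l)
=-(-1)^{k+l}\sum_{p=1}^5(-1)^{p+1}s_p(k)^2s_p(l)^2,\\
\text{vertex-6 part:}\quad & 2\,s_6(k)s_6(l)\sum_{p=1}^5 2\sin^2\tfrac{p\pi}{6}\,s_p(k)s_p(l).
\end{align*}
The diagonal and anti-diagonal pieces therefore combine to $(1-(-1)^{k+l})D_{kl}$, where $D_{kl}:=\sum_{p=1}^5(-1)^{p+1}s_p(k)^2s_p(l)^2$. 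This vanishes when $k\equiv l \pmod 2$ and equals $2D_{kl}$ otherwise. The vertex-$6$ piece is nonzero only when both $k$ and $l$ are odd (or $k=l$ odd).

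Hence for $k=l$ only the vertex-$6$ piece survives, and only for odd $k$. In that case it equals $4\sum_{p=1}^5\sin^2(p\pi/6)\,s_p(k)^2$. I will evaluate this with the identities $2\sin^2 x=1-\cos 2x$, which turn it into a sum of cosines $\cos(jp\pi/12)$ over $p=1,\dots,5$. The target value is $36\cdot\frac16=6$.

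The off-diagonal case splits by parity.
\begin{itemize}
\item For $k,l$ of \emph{opposite parity}, only $2D_{kl}/36$ remains. I must show it vanishes unless $\{k,l\}$ contains $6$, where it should give $\frac1{12}$, i.e.\ $D_{kl}=\frac32$. Writing $4s_p(k)^2s_p(l)^2=(1-\cos\frac{pk\pi}{6})(1-\cos\frac{pl\pi}{6})$ and expanding into cosines of $p(k\pm l)\pi/6$, the quantity becomes an alternating sum $\sum_{p=1}^5(-1)^{p+1}\cos(pm\pi/6)$.
\item For \emph{both $k,l$ odd}, I need $\sum_{p=1}^5\sin^2(p\pi/6)s_p(k)s_p(l)$. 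Using product-to-sum formulas, this is again a short sum of cosines $\cos(pm\pi/12)$ with $m\in\{k\pm l,\ k\pm l\pm4\}$, and the sign $s_6(k)s_6(l)$ must be tracked.
\item For \emph{both even}, everything vanishes.
\end{itemize}

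The key computational tool is closed forms for $\sum_{p=1}^{5}\cos(pm\pi/12)$ and $\sum_{p=1}^5(-1)^{p+1}\cos(pm\pi/6)$ as functions of $m$ modulo $24$. I would obtain these from the full-period sums $\sum_{p=0}^{11}$ via the reflection $p\mapsto12-p$ and the terms at $p=0$ and $p=6$. For example, for $m$ not divisible by $24$, $\sum_{p=1}^{11}\cos(pm\pi/12)=\frac{(-1)^{m+1}-1}{2}$, and $\cos(p m\pi/12)$ at $p=12-p'$ equals $(-1)^m\cos(p'm\pi/12)$. This gives $\sum_{p=1}^5$ explicitly only when $m$ is even. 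For odd $m$ the half-sums are not symmetric, so I expect cancellation between the $k+l$ and $k-l$ terms to be necessary.

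The main obstacle is the odd–odd case. There the half-range sums of $\cos(pm\pi/12)$ with odd $m$ do not reduce by symmetry alone, and one must verify that exactly the pairs in $\mathcal S$ give $-\frac1{12}$ (e.g.\ $(1,3)$, where $k+l=4$ interacts with the $\pm4$ shifts from $\sin^2(p\pi/6)$). My first attempt would be to combine terms so that only even $m$ appear, using $s_p(k)s_p(l)=\frac12(\cos\frac{p(k-l)\pi}{12}-\cos\frac{p(k+l)\pi}{12})$ with $k\pm l$ even when both are odd. This makes every frequency even, so the reflection trick applies uniformly, reducing everything to a table lookup in $(k\pm l)\bmod 24$. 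As a fallback I would simply tabulate the finitely many cases; there are $55$ pairs plus $11$ diagonal entries, all with algebraic values in $\mathbb Q(\sqrt3)$.
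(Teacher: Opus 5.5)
Your proposal is correct and takes essentially the same route as the paper. Your diagonal/anti-diagonal/vertex-$6$ split reproduces exactly the identity $36C_{kl}=(1-(-1)^{k+l})A_{kl}+4x_6B_{kl}$, and your reflection $p\mapsto 12-p$ to full-period cosine sums is the paper's twelfth-root-of-unity orthogonality, with your even frequencies $k\pm l$ (in units of $\pi/12$) playing the role of its $2u,2v$.

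The odd-$m$ obstacle you anticipate never arises, because every frequency in the odd--odd case is automatically even. What remains is bookkeeping: exactly one of $k\pm l$ is divisible by $4$, and the reduced sum (the paper's $d$) vanishes at the other. The sign $s_6(k)s_6(l)=(-1)^{(k+l)/2-1}$ then exactly compensates, which singles out $l-k=4$ or $k+l\in\{4,20\}$, i.e.\ the pairs in $\mathcal S$.
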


\begin{proof}
Set
\begin{align*}
x_p=\sin{\frac{pk\pi}{12}}\sin{\frac{pl\pi}{12}}.
\end{align*}
Note that
\begin{align*}
x_{12-p}=(-1)^{k+l}x_p.
\end{align*}
Substitution of the entries of $Y$ then gives the exact identity
\begin{align*}
36C_{kl} = \bigl(1-(-1)^{k+l}\bigr)A_{kl} + 4x_6B_{kl},
\end{align*}
where
\begin{align*}
A_{kl}=\sum_{p=1}^{5}(-1)^{p+1}x_p^2, \quad
B_{kl}=\sum_{p=1}^{5}\sin^2\frac{p\pi}{6} x_p.
\end{align*} 
Consider all three parity cases as shown in the table below.
For an odd index $r$, put $\displaystyle \varepsilon_r=(-1)^{(r-1)/2}
=\sin\frac{r\pi}{2}.$
Since
$\displaystyle x_6=\sin\frac{k\pi}{2}\sin\frac{l\pi}{2}$,
the identity above reduces in the three parity cases as follows:
\begin{align*}
\begin{array}{c|c|c|c}
\text{parities of } k , l &1-(-1)^{k+l} &x_6 &36C_{kl}\\ \hline
\text{Case 1 : even, even} & 0 & 0 & 0\\
\text{Case 2 : one odd, one even} & 2 & 0 & 2 A_{kl} \\
\text{Case 3 : odd, odd} & 0 & \varepsilon_k\varepsilon_l & 4\varepsilon_k\varepsilon_lB_{kl}
\end{array}
\end{align*}
Evaluations below rely on a single orthogonality relation with
$\omega=e^{i\pi/6}$ a primitive twelfth root of unity,
\begin{align*}
\sum_{p=0}^{11}\omega^{pm}
=12\cdot\mathbf{1}[12\mid m],
\end{align*}
where $\mathbf{1}[\mathcal P]$ equals $1$ if $\mathcal P$ holds and $0$
otherwise.

\medskip
\noindent\emph{Case 1: both indices are even.}
Both terms in the formula for $36C_{kl}$ vanish.  Hence
\begin{align*}
C_{kl}=0
\end{align*}
whenever $k$ and $l$ are both even. 
This includes $C_{kk}=0$ for even $k$.

\medskip
\noindent\emph{Case 2: exactly one index is even.}
By symmetry in $k$ and $l$, we may assume without loss of generality
that $k=o$ is odd and $l=e$ is even.  In this case $x_6=0$, and hence
\begin{align*}
C_{oe}=\frac{1}{18}A_{oe}.
\end{align*}
The summand of $A_{oe}$ is invariant under $p\mapsto12-p$, and
$x_p$ vanishes at $p=0$ and $p=6$.  Therefore,
\begin{align*}
A_{oe}
=\frac{1}{2}\sum_{p=0}^{11}(-1)^{p+1}x_p^2.
\end{align*}
Using
\[
\sin^2\frac{pr\pi}{12}
=\frac{1}{4}\bigl(2-\omega^{pr}-\omega^{-pr}\bigr),
\]
we obtain
\[
x_p^2
=\frac{1}{16}\sum_{a,b\in\{0,\pm1\}}
c_{ab}\omega^{p(ao+be)},
\]
where $c_{00}=4$, $c_{\pm1,0}=c_{0,\pm1}=-2$, and
$c_{ab}=1$ when $a,b\ne0$.  Since
\[
(-1)^{p+1}=-\omega^{6p},
\]
it follows that
\[
\begin{aligned}
A_{oe}
&=-\frac{1}{32}\sum_{a,b}c_{ab}
  \sum_{p=0}^{11}\omega^{p(6+ao+be)}\\
&=-\frac{3}{8}\sum_{a,b}c_{ab}\,
  \mathbf1[ao+be\equiv6\pmod{12}],
\end{aligned}
\]
where the second equality follows from orthogonality.

Since $ao+be$ has the parity of $a$, the congruence forces $a=0$.
The choice $b=0$ is impossible, while for both $b=1$ and $b=-1$
the congruence $be\equiv6\pmod{12}$ holds precisely when $e=6$.
Hence
\begin{align*}
A_{oe}
=-\frac{3}{8}(-2-2)\mathbf1[e=6]
=\frac{3}{2}\mathbf1[e=6].
\end{align*}
Consequently,
\begin{align*}
C_{oe}=
\begin{cases}
\frac{1}{12}, & e=6,\\
0, & e=2,4,8,\text{ or }10.
\end{cases}
\end{align*}

\medskip
\noindent\emph{Case 3: both indices are odd.} 
Since $C_{kl}=C_{lk}$, we may assume $k\le l$.
In this case the
$A_{kl}$ term vanishes and $x_6=\varepsilon_k\varepsilon_l$, so
\begin{align*}
C_{kl}=\frac{\varepsilon_k\varepsilon_l}{9}B_{kl}.
\end{align*}
Put
\begin{align*}
u=\frac{k-l}{2},
\quad
v=\frac{k+l}{2},
\end{align*}
which are integers with $u+v=k$ odd, and define
\begin{align*}
d(m)=\sum_{p=1}^{5}\sin^2\frac{p\pi}{6}\cos\frac{pm\pi}{6},
\quad m\in\mathbb Z.
\end{align*}
Product-to-sum gives
\begin{align*}
B_{kl}=\frac{1}{2}\bigl(d(u)-d(v)\bigr).
\end{align*}
The summand of $d(m)$ is invariant under $p\mapsto12-p$, and the terms with
$p=0$ and $p=6$ vanish.  Therefore, using
$\displaystyle \sin^2\frac{p\pi}{6}=\frac{1}{4}\bigl(2-\omega^{2p}-\omega^{-2p}\bigr)$ and
$\displaystyle \cos\frac{pm\pi}{6}=\frac{1}{2}\bigl(\omega^{pm}+\omega^{-pm}\bigr)$,
\begin{align*}
2d(m)
=\frac{1}{8}\sum_{p=0}^{11}
\bigl(2-\omega^{2p}-\omega^{-2p}\bigr)
\bigl(\omega^{pm}+\omega^{-pm}\bigr),
\end{align*}
and the orthogonality relation gives
\begin{align*}
d(m)
=3\cdot\mathbf{1}[ 12\mid m ]
-\frac{3}{2}\cdot\mathbf{1}[ m\equiv\pm2\ (\mathrm{mod}\ 12) ].
\end{align*}
In particular, $d(m)=0$ whenever $m$ is odd.

\medskip
\noindent
Since $u+v$ is odd, exactly one of $u,v$ is even; let $w$ denote that
one.  Then $d$ vanishes at the odd one, and
\begin{align*}
\varepsilon_k\varepsilon_l
=(-1)^{\frac{k+l}{2}-1}
=-(-1)^{v}.
\end{align*}
Hence the two cases reduce to a single uniform formula
(if $w=u$, then $d(v)=0$ and $-(-1)^v=1$; if $w=v$, then $d(u)=0$ and
$-(-1)^v=-1$):
\begin{align*}
\varepsilon_k\varepsilon_lB_{kl}=\frac{1}{2} d(w),
\quad\text{hence}\quad
C_{kl}=\frac{1}{18} d(w).
\end{align*}
The closed form of $d$ now yields the entire table.
For $k=l$ we have $w=u=0$, so
\begin{align*}
C_{kk}=\frac{3}{18}=\frac{1}{6}.
\end{align*}
For $k<l$, the ranges $-5\le u\le-1$ and $2\le v\le10$ show that
$d(w)\ne0$ forces $w\equiv\pm2\pmod{12}$, that is, $u=-2$ or
$v\in\{2,10\}$; equivalently
\begin{align*}
l-k=4
\quad\text{or}\quad
k+l\in\{4,20\}.
\end{align*}
These conditions give precisely the six pairs in 
\begin{align*}
\mathcal S
=\{(1,3),(1,5),(3,7),(5,9),(7,11),(9,11)\},
\end{align*}
each with
\begin{align*}
C_{kl}=\frac{1}{18}\cdot\Bigl(-\frac{3}{2}\Bigr)=-\frac{1}{12},
\end{align*}
and every other coefficient with both indices odd vanishes. 
This proves the table and covers all three cases based on parity.
\end{proof}

\begin{proof}[Proof of Theorem~\ref{thm:p11}]
By the spectral decomposition and Lemma~\ref{lem:coeff},
\begin{align*}
\langle Y,M(t)\rangle_F
=\sum_{k=1}^{11}C_{kk}
+2\sum_{1\le k<l\le11}
C_{kl}\cos\bigl((\theta_k-\theta_l)t\bigr).
\end{align*}
The constant term equals $6\cdot\frac{1}{6}=1$.  Also,
\begin{align*}
\theta_{12-k}=-\theta_k,
\quad
\theta_6=0,
\quad
\theta_1=\theta_3+\theta_5.
\end{align*}
The last identity is precisely
\begin{align*}
\cos15^{\circ}=\cos45^{\circ}+\cos75^{\circ}.
\end{align*}
The nonzero off-diagonal coefficients in Lemma~\ref{lem:coeff} group as follows:
\begin{align*}
\begin{array}{c|c|c}
\text{common difference}
         & C_{kl}=1/12 & C_{kl}=-1/12\\ \hline
\theta_5 & (5,6),(6,7) & (1,3),(9,11)\\
\theta_3 &(3,6),(6,9) & (1,5),(7,11)\\
\theta_1 & (1,6),(6,11) & (3,7),(5,9)
\end{array}
\end{align*}
For each of the three differences, the sum of the corresponding coefficients
is zero.  All nonconstant terms therefore cancel, and
\begin{align*}
\langle Y,M(t)\rangle_F=1
\quad\text{for every }t\in\mathbb R.
\end{align*}
Meanwhile, 
\begin{align*}
\sum_{1 \le p,q \le 11}Y_{pq}=12,
\end{align*}
so
\begin{align*}
\left\langle Y,\frac{1}{11}\J\right\rangle_F=\frac{12}{11}\ne1.
\end{align*}
Lemma~\ref{lem:linf} now proves the theorem.
\end{proof}

\begin{remark}
We leave open whether there are infinitely many paths $P_n$ that
admit no probability measure yielding uniform average mixing.
\end{remark}

\end{document}